\documentclass[12pt]{article}
\usepackage{booktabs}
\usepackage{caption}
\usepackage{mathrsfs}
\usepackage{amsmath}
\usepackage{amsfonts,amsthm,amssymb,mathrsfs,bbding}
\usepackage{float}
\usepackage{graphics,epstopdf}
\usepackage{graphics,multicol}
\usepackage{graphicx}
\usepackage{color}
\usepackage{enumerate}
\usepackage{caption}
\usepackage{rotating}
\usepackage{lscape}
\usepackage{longtable}
\allowdisplaybreaks[4]
\usepackage{tabularx}
\usepackage[colorlinks=true,anchorcolor=blue,filecolor=blue,linkcolor=blue,urlcolor=blue,citecolor=blue]{hyperref}
\usepackage{extarrows}
\usepackage{cite}
\usepackage{latexsym,bm}
\usepackage{mathtools}
\pagestyle{myheadings} \markright{} \textwidth 150mm \textheight 235mm \oddsidemargin=1cm
\evensidemargin=\oddsidemargin\topmargin=-1.5cm

\newtheorem{thm}{Theorem}

\newtheorem{conj}{Conjecture}[section]
\newtheorem{claim}{Claim}

\addtocounter{section}{0}
\begin{document}
\title{Maximum degree and spectral radius of graphs in terms of size\footnote{
This work is supported by National Natural Science Foundation of China (Nos. 12171154, 1211101361)
and
the China Postdoctoral Science Foundation (No. 2021M691671).}}

\author{Zhiwen Wang$^{a}$, \ \ Ji-Ming Guo$^{b}$\footnote{Corresponding author.
\newline{\it \hspace*{5mm}Email addresses:}
jimingguo@hotmail.com (J. Guo), walkerwzw@163.com (Z. Wang).}\\[2mm]
\small $^a$School of Mathematical Sciences, NanKai University, Tianjin, 300071, China\\
\small $^b$Department of Mathematics, East China University of Science and Technology, \\
\small Shanghai, 200237, China}
%\author{Zhiwen Wang\thanks{School of Mathematical Sciences and LPMC, Nankai University, Tianjin 300071, China.
%Email: walkerwzw@163.com. Supported by the China Postdoctoral Science Foundation ().}
%~,~~Jiming Guo\thanks{Corresponding author.
%Department of Mathematics, East China University of Science and Technology, Shanghai, 200237, China.
%Email: jimingguo@hotmail.com.}}
\date{}
\maketitle
{\flushleft\large\bf Abstract:}
Research on the relationship of the (signless Laplacian) spectral radius of a graph with its structure properties
is an important research project in spectral graph theory.
Denote by $\rho(G)$ and $q(G)$ the spectral radius and the signless Laplacian spectral radius of a graph $G$, respectively.
%In this paper, we investigate the maximal (signless) spectral radius of a graphs of size $m$ by
%restricting the maximum degree or forbidding quadrilaterals.
Let $k\ge 0$ be a fixed integer and $G$ be a graph of size $m$ which is large enough.
We show that if $\rho(G)\ge\sqrt{m-k}$,
then $C_4\subseteq G$ or $K_{1,m-k}\subseteq G$.
Furthermore, we prove that if $q(G)\ge m-k$, then $K_{1,m-k}\subseteq G$.
Both these two results extend some known results.
\begin{flushleft}
\textbf{Keywords:} Spectral radius; Maximum degree; Size
\end{flushleft}
\textbf{AMS Classification:} 05C50; 05C35

\section{Introduction}

\indent \, \quad Graphs considered in the paper are simple and undirected.
For a graph $G$, let $\rho(G)$ be the spectral radius of its adjacency matrix $A(G)$,
and $q(G)$ be the spectral radius of its signless Laplacian matrix $Q(G)$.
From Perron--Frobenius theorem,
for a connected graph $G$, the adjacency (resp., signless Laplacian) spectral radius of $G$
is the maximum modulus of its adjacency (resp., signless Laplacian) eigenvalues.
In general, we call $\rho(G)$ the spectral radius of $G$,
and $q(G)$ the signless Laplacian spectral radius of $G$.

It is well known that the structure properties and parameters of graphs
have close relationship with eigenvalues of graphs.
During this recent thirty years, the (signless Laplacian) spectral radius among graphs with described structures properties
has attracted considerable attention.

A graph $G$ is defined to be $H$-free if $G$ does not contain $H$ as a subgraph.
As a spectral version of extremal graph theory,
Nikiforov \cite{Niki10} posed a spectral Tur\'an type problem that
 \textit{what is the maximal spectral radius $\rho(G)$ among $H$-free graphs $G$ of order $n$?}
This problem is also known as the Brualdi--Solheid--Tur\'an type problem
and has been investigated in much literature for some special graphs $H$, for which one can refer to
clique \cite{Niki07}, book \cite{ZL}, friendship \cite{CFTZ20} and references therein.

Recently, replacing the order $n$ by the size $m$,
a perspective to spectral Tur\'an type problem in terms of the size has received much research.
This problem asks that
 \textit{what is the maximal spectral radius $\rho(G)$ among $H$-free graphs $G$ of size $m$?}
To the knowledge of us, the history of studying this problem may be dated back at least to Nosal's
theorem \cite{N70} in 1970.
Up to now, there is few graph $H$ such that the maximal spectral radius $\rho(G)$
among $H$-free graphs of size $m$ has been determined.
Some relevant conclusions have been obtained in the past two decades.
Nikiforov in \cite{Niki02} extended Nosal's theorem from triangles to clique, and answered a conjecture
by Zhai, Lin and Shu \cite{ZLS} about books in \cite{Niki-arxiv}.
For more detailed results, we refer one to \cite{LNW, ZLS}.

Here we pay our main attention to the spectral Tur\'an type problem on quadrilaterals and stars in terms of the size.
Let $K_{1,n-1}$ and $C_n$ be a star and a cycle on $n$ vertices, respectively.
Denote by $K_{1,n-1}+e$ the graph by inserting an edge to the independent set of $K_{1,n-1}$,
and denote by $K_{1,n-1}^e$ the graph by attaching a pendent vertex to a pendent vertex of $K_{1,n-1}$.

In \cite{Niki09},
Nikiforov determined the maximum spectral radius among all $C_4$-free graphs
of size $m$.

\begin{thm}\textsc{\cite{Niki09}}\label{thm:Niki}
  Let $G$ be a graph of size $m\ge 9$. If $\rho(G)>\sqrt{m}$, then
  $C_4\subseteq G$.
\end{thm}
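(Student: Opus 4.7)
The plan is to prove the contrapositive: if $G$ is a $C_4$-free graph of size $m\ge 9$, then $\rho(G)\le\sqrt{m}$. First I would reduce to the case in which $G$ is connected, by replacing $G$ with a connected component on which the spectral radius is attained; this component is still $C_4$-free and has size at most $m$.

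Let $\mathbf{x}$ be the positive Perron eigenvector of $A(G)$, normalized so that $\max_v x_v = x_{v^*}=1$. Evaluating the identity $\rho^2\mathbf{x}=A^2\mathbf{x}$ at $v^*$ yields
\[
\rho^2 \;=\; d(v^*)+\sum_{u\neq v^*} |N(u)\cap N(v^*)|\,x_u.
\]
Since $G$ is $C_4$-free, any two distinct vertices share at most one common neighbor, so $|N(u)\cap N(v^*)|\le 1$. Combined with $x_u\le 1$, this yields
\[
\rho^2 \;\le\; d(v^*) + |P|, \qquad P \;:=\; \{u\neq v^* : N(u)\cap N(v^*)\neq\emptyset\}.
\]

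The structural payoff of $C_4$-freeness is then that (i) the induced subgraph on $N(v^*)$ is a matching, since a path of length two inside $N(v^*)$ would close a $C_4$ through $v^*$; and (ii) every vertex outside $N(v^*)\cup\{v^*\}$ has at most one neighbor in $N(v^*)$, for otherwise two such common neighbors would again form a $C_4$. Setting $S = N(v^*)$, $T = V(G)\setminus(S\cup\{v^*\})$, and letting $e_S$, $e_T$, $e_{ST}$ denote the numbers of edges inside $S$, inside $T$, and between $S$ and $T$, a direct count gives $|P|=2e_S+e_{ST}$, while $m = d(v^*)+e_S+e_T+e_{ST}$. Substituting,
\[
\rho^2 \;\le\; d(v^*)+2e_S+e_{ST} \;=\; m + e_S - e_T.
\]
When $e_S\le e_T$, this already delivers $\rho^2\le m$, and we are done.

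The residual case $e_S > e_T$ forces $e_S\ge 1$, so that $v^*$ lies on a triangle. Here the bound $\rho^2\le d(v^*)+|P|$ is slack because it replaces each $x_u$ by $1$. To close the gap, I plan to re-inject the eigenvalue equations $\rho x_u=\sum_{w\sim u} x_w$ at vertices $u\in P$, using the matching restriction $e_S\le\lfloor d(v^*)/2\rfloor$ together with the normalization identity $\rho=\sum_{u\in S} x_u$, in order to quantify how much $\sum_{u\in P} x_u$ falls short of $|P|$. The main obstacle will be making that slack beat the deficit $e_S - e_T$; the hypothesis $m\ge 9$ is invoked precisely to rule out the small exceptional graphs, notably the star $K_{1,m}$ and the friendship graph on seven vertices (at $m=9$), for both of which $\rho(G)=\sqrt{m}$ exactly and equality cannot be improved.
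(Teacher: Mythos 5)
Everything up to the inequality $\rho^2\le d(v^*)+2e_S+e_{ST}=m+e_S-e_T$ is correct, and it is essentially the same first-order decomposition the paper itself runs in Section 2 for its generalization: there $A=N(u^*)$, $C_4$-freeness gives $d_A(u)\le 1$ for $u\notin A\cup\{u^*\}$ and $P_3\not\subseteq G[A]$, i.e.\ your matching observation. But your proposal stops at the decisive point: the residual case $e_S>e_T$ is only announced as a plan, and it is precisely the hard case, not a mopping-up step. Concretely, take the friendship graph $F_3$ (three triangles through $v^*$; $m=9$, $C_4$-free): then $d(v^*)=6$, $e_S=3$, $e_T=e_{ST}=0$, and the eigen-equation at each $u\in S$ gives $x_u=\frac{1}{\rho-1}=\frac12$, so $\rho^2=6+\sum_{u\in S}x_u=9=m$ exactly. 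Thus at the threshold $m=9$ the slack you intend to harvest must cancel the deficit $e_S-e_T$ with zero margin, so no soft bound of the form ``$x_u$ is somewhat less than $1$'' can close the case; one needs a quantitative second pass through the eigen-equation on the matching edges of $S$, of the kind the paper carries out when it expands $\rho\sum_{uv\in E(A)}(x_u+x_v)$ and re-bounds it to reach inequalities like $(\rho-3)e(A)+(\rho-1)e(B)+\cdots\le 0$. Note moreover that the paper's own machinery, specialized to $k=0$, requires $m\ge 10$ and concludes by the strict contradiction $\rho>3$, so even executing your plan along those lines would not recover the stated theorem at $m=9$; that endpoint needs the finer analysis in Nikiforov's original argument. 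As written, the key inequality in the case $e_S>e_T$ is simply missing, so the proposal is not yet a proof.

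There is also a smaller but genuine defect in the opening reduction: you replace $G$ by a component $H$ on which $\rho$ is attained and invoke the contrapositive for $H$, but $H$ may have size $m'<9$, where the contrapositive is false --- the bowtie ($F_2$, $m'=6$) is $C_4$-free with $\rho^2=\rho+4\approx 6.56>6$. What you actually need is only $\rho(H)\le\sqrt m$ with the original $m\ge 9$, which is true, but it does not follow formally from the component-wise claim; connected $C_4$-free graphs with at most $8$ edges must be checked separately to have spectral radius at most $3$.
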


Zhai and Shu \cite{ZS22} improved the result
in Theorem \ref{thm:Niki} for a non-bipartite connected graph by showing the following
theorem.

\begin{thm}\textsc{\cite{ZS22}}\label{thm:Zhai}
Let $G$ be a non-bipartite and connected graph of size $m\ge 26$.
If $\rho(G)\ge \rho(K_{1,m-1}+e)$, then
$C_4\subseteq G$ unless $G$ is $K_{1,m-1}+e$.
\end{thm}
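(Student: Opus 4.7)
\emph{Proof plan.} The plan is to assume $G$ is $C_4$-free and deduce $G\cong K_{1,m-1}+e$; so suppose for contradiction that $G$ is $C_4$-free with $G\not\cong K_{1,m-1}+e$ and $\rho(G)\ge\rho^*:=\rho(K_{1,m-1}+e)$. I would begin by computing $\rho^*$ explicitly via the equitable partition of $V(K_{1,m-1}+e)$ into the center, the two bonded leaves, and the $m-3$ pendant leaves, which shows that $\rho^*$ is the largest root of $t^3-t^2-(m-1)t+(m-3)=0$; rearranging yields the clean identity $(\rho^*)^2=m-1+2/(\rho^*-1)$, so in particular $(\rho^*)^2>m-1$. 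Combined with Theorem~\ref{thm:Niki}, this pins $\rho:=\rho(G)\in[\rho^*,\sqrt{m}\,]$. Let $x$ be the Perron eigenvector of $A(G)$ normalized so that $x_{u^*}=\max_v x_v=1$, and write $d=d_{u^*}$.

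The key structural inequality comes from $C_4$-freeness: the induced graph $G[N(u^*)]$ is a matching and $|N(u^*)\cap N(w)|\le 1$ for every $w\neq u^*$. Expanding
\[
\rho^2=\rho^2x_{u^*}=\sum_{v\sim u^*}\sum_{w\sim v}x_w = d+\sum_{w\neq u^*}|N(u^*)\cap N(w)|\,x_w \le d+\sum_{v\sim u^*}(d_v-1),
\]
and partitioning $E(G)$ across $\{u^*\}$, $N(u^*)$ and $V\setminus N[u^*]$, I obtain the cleaner form
\[
\rho^2\le m+e(G[N(u^*)])-e(G-N[u^*]).
\]
This is sharpened using $x_w\le d_w/\rho$ for $w\notin N[u^*]$, which absorbs an additional term proportional to $e(G-N[u^*])$. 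Comparing with $\rho^2\ge(\rho^*)^2>m-1$ is intended to kill every edge of $G-N[u^*]$ and force $N[u^*]=V(G)$, i.e., $u^*$ dominates $G$. Since $G[N(u^*)]$ is a matching, $G$ then decomposes as the star $K_{1,d}$ together with a matching $M$ of size $k:=m-d$ lying inside $N(u^*)$.

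Inside this one-parameter family $H_{d,k}$ (with $d+k=m$), the same equitable-partition argument gives $\rho(H_{d,k})$ as the largest root of $t^3-t^2-d\,t+(d-2k)=0$. Implicit differentiation along the line $d+k=m$ yields
\[
\frac{\partial\rho}{\partial d}=\frac{\rho-3}{3\rho^2-2\rho-d}>0\quad\text{for }\rho>3,
\]
which holds throughout the parameter range once $m\ge 26$. Because $G$ is non-bipartite we must have $k\ge 1$ (the star $H_{m,0}=K_{1,m}$ is bipartite), so $d\in[\lceil 2m/3\rceil,\,m-1]$; on this range $\rho(H_{d,k})$ is strictly increasing in $d$ and is maximized uniquely at $(d,k)=(m-1,1)$, i.e., at $K_{1,m-1}+e$. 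Combined with $\rho(G)\ge\rho^*$, this forces $G\cong K_{1,m-1}+e$, contradicting the working assumption.

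The hard part, and in my view the main obstacle, is the domination step $N[u^*]=V(G)$. The crude inequality $\rho^2\le m+d/2$ coming only from $C_4$-freeness is far too weak, so the refinement must account for the eigenvector mass on $V\setminus N[u^*]$ against the edges it carries, using the gap $(\rho^*)^2-(m-1)\ge 2/(\rho^*-1)$ in an essential way. If the direct eigenvalue estimate stalls, a reasonable fallback is a local-switching argument: for any edge $vw\subseteq V\setminus N[u^*]$, delete $vw$ and add $u^*w$ at the endpoint of smaller $x$-weight, verify that the new graph is still $C_4$-free and connected, and show that $\rho$ strictly increases, contradicting extremality of $\rho(G)$.
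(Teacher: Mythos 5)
You should first note that the paper does not prove this statement at all: Theorem~\ref{thm:Zhai} is quoted from Zhai--Shu \cite{ZS22}, so your plan can only be measured against the analogous machinery the paper deploys for Theorem~\ref{main-thm-1}. Your preparatory steps are correct: the quotient cubic $t^3-t^2-(m-1)t+(m-3)$ for $K_{1,m-1}+e$, the identity $(\rho^*)^2=m-1+2/(\rho^*-1)$, the fact that $C_4$-freeness makes $G[N(u^*)]$ a matching with $|N(u^*)\cap N(w)|\le 1$, the inequality $\rho^2\le m+e(G[N(u^*)])-e(G-N[u^*])$, and the cubic $t^3-t^2-dt+(d-2k)$ with its monotonicity in $d$ along $d+k=m$ all check out.

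The genuine gap is the step you yourself flag as the main obstacle, and it is worse than you suggest, in three ways. First, $\rho^2>m-1$ against $\rho^2\le m+e(A)-e(B)$ (writing $A=N(u^*)$, $B=V\setminus N[u^*]$) yields only $e(B)\le e(A)$, and the refinement $x_w\le d_w/\rho$ gains terms of order $e(A,B)/\rho$, which alone cannot force $e(B)=0$; one needs the two-stage bootstrap the paper runs in Section~\ref{section2} (first a crude bound on $e(B)$, then a rerun with the improved coordinate bounds). Second, and more fundamentally, even granting $e(G-N[u^*])=0$, the conclusion $N[u^*]=V(G)$ is a non sequitur: a vertex of $B$ whose unique neighbor lies in $A$ contributes no edge to $G[B]$, so configurations such as $K_{1,m-1}^e$ or a star with pendant edges hanging from $A$ (precisely the near-extremal graphs appearing in Theorem~\ref{thm:Wang}, with $\rho$ just below $\sqrt{m-1}$) survive your argument, and your closing comparison inside the one-parameter family $H_{d,k}$ simply omits these competitors; this is why the paper's own technique splits $B$ into $B_1$ (the pendant-type vertices you lose) and $B_2$ and estimates them separately. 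Third, the switching fallback as stated can fail: deleting $vw\subseteq B$ and adding $u^*w$ creates the quadrilateral $u^*\,w\,b\,a'\,u^*$ whenever $w$ retains a neighbor $b\in B$ that is adjacent to some $a'\in A$, and the deletion may also disconnect $G$ or destroy its last odd cycle, so $C_4$-freeness, connectivity and non-bipartiteness of the modified graph are all unverified. Until the domination step is replaced by an argument covering the pendant-vertex configurations, the proof does not close.
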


Recently, Wang \cite{W22+} provided a generalization of Theorems \ref{thm:Niki} and \ref{thm:Zhai}.
\begin{thm}\textsc{\cite{W22+}}\label{thm:Wang}
  Let $G$ be a graph of size $m\ge 27$. If $\rho(G)\ge \sqrt{m-1}$, then
  $C_4\subseteq G$ unless $G$ is one of these graphs (with possibly isolated vertices):
  $K_{1,m}$, $K_{1,m-1}+e$, $K_{1,m-1}^e$, or $K_{1,m-1}\cup P_2$.
\end{thm}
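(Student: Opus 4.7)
The plan is to leverage Theorem \ref{thm:Niki} to pin down a ``Perron component'' of $G$ and then analyze its local structure via the Perron eigenvector.

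First, since isolated vertices do not affect $\rho$, I write $G=G_1\sqcup H\sqcup I$, where $G_1$ is a connected component with $\rho(G_1)=\rho(G)$, $H$ is the union of the remaining non-trivial components, and $I$ is the set of isolated vertices. Setting $m_1=e(G_1)$, Theorem \ref{thm:Niki} applied to the $C_4$-free graph $G_1$ yields $\rho(G_1)\le\sqrt{m_1}$, while the hypothesis gives $\rho(G_1)\ge\sqrt{m-1}$. Hence $m_1\in\{m-1,m\}$. When $m_1=m-1$, Theorem \ref{thm:Niki} is saturated; a sharpening of its equality case (via a Kelmans-type transformation on the Perron vector) forces $G_1=K_{1,m-1}$, and the single remaining edge forces $H=P_2$, producing the exception $K_{1,m-1}\cup P_2$.

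The substantial work lies in the case $m_1=m$, where $G_1$ itself is a connected $C_4$-free graph of size $m$ with $\rho(G_1)\ge\sqrt{m-1}$. Let $x$ denote its Perron eigenvector, normalized so that $x_{u^*}=\max_v x_v=1$. Starting from $\rho(G_1)^2 x_{u^*}=\sum_w (A^2)_{u^*w}\,x_w$ and using $(A^2)_{u^*w}=|N(u^*)\cap N(w)|\le 1$ for $w\ne u^*$ (a direct consequence of $C_4$-freeness), one obtains
\[
m-1\le\rho(G_1)^2\le d(u^*)+\sum_{w\in W}x_w\le d(u^*)+|W|,
\]
where $W$ collects the vertices other than $u^*$ that share a common neighbor with $u^*$. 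Combined with the exact edge-count for $G_1$ and the $C_4$-free structural constraints (the subgraph induced on $N(u^*)$ is a matching, and each vertex of the second neighborhood $N_2(u^*)$ has a unique neighbor in $N(u^*)$), this pushes to $d(u^*)\ge m-O(1)$, so $G_1$ must be a ``near-star''.

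The final step is to rule out every near-star candidate apart from $K_{1,m}$, $K_{1,m-1}+e$, and $K_{1,m-1}^e$, and this is the main obstacle: several candidates (for instance $K_{1,m-2}$ with two extra pendants, or $K_{1,m-1}$ with an edge inserted at distance two from the center) have spectral radius very close to $\sqrt{m-1}$. I would handle these via a branch-shifting/Kelmans-type comparison to transfer edges toward the center, reducing the comparison to a short explicit list, and then verify via the characteristic polynomial that each excluded candidate $H$ satisfies $\rho(H)<\sqrt{m-1}$ for all $m\ge 27$. The threshold $m\ge 27$ is precisely the point at which these finite-size polynomial estimates become tight.
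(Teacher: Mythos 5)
Your overall architecture — split off the Perron component, use Theorem \ref{thm:Niki} to force $m_1\in\{m-1,m\}$, settle $m_1=m-1$ by the equality case, and attack the connected case through the eigen-equation at the maximal vertex $u^*$ — is the right skeleton, and the connected case is indeed where this paper's own machinery (the proof of Theorem \ref{main-thm-1}, whose $k=1$ instance is the heart of Theorem \ref{thm:Wang}) lives. But your central step has a genuine gap. From $\rho^2 x_{u^*}=\sum_w (A^2)_{u^*w}x_w$ you derive $m-1\le d(u^*)+\sum_{w\in W}x_w\le d(u^*)+|W|$ and claim this ``pushes to $d(u^*)\ge m-O(1)$.'' It does not. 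Writing $A=N(u^*)$ and $B$ for the rest, $W$ consists of the endpoints of the matching $E(A)$ together with the second neighborhood, so $|W|\le 2e(A)+e(A,B)\le 2(m-d(u^*))$; the inequality $m-1\le d(u^*)+|W|$ is therefore compatible with $d(u^*)$ being far from $m$. The friendship graph is a concrete witness: there $d(u^*)+|W|=2(n-1)=\tfrac{4m}{3}\ge m-1$ while $\Delta\approx \tfrac{2m}{3}$, so the displayed chain cannot by itself force a near-star. What is missing is second-order control of $\sum_{w\in W}x_w$, and this is exactly the two-round bootstrap in Section \ref{section2}: first show $x_u\le x_{u^*}/\rho$ for the vertices $u\in B$ with $d_B(u)=0$, feed this back to get $(\rho-3)e(A)+(\rho-3)e(B)+(\rho-2)e(A,B_1)\le \rho-1$ and hence $e(B)\le 1$; only then does every remaining vertex of $B$ have degree at most $2$, so $x_u\le 2x_{u^*}/\rho$, and a second pass yields $(\rho-3)e(A)+(\rho-1)e(B)+(\rho-2)e(A,B_1)+(\rho-3)e(A,B_2)\le \rho-1$, i.e., $m-d(u^*)\le 1$ once $\rho>5$. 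Your sketch contains no substitute for either round, and without one the argument stalls precisely at its load-bearing step.

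Two further points. First, Theorem \ref{thm:Niki} as quoted is a strict-inequality statement, so the equality characterization you invoke in the case $m_1=m-1$ ($C_4$-free with $\rho=\sqrt{m_1}$ forces a star) is asserted, not proved; it is true and appears in Nikiforov's original paper, but ``a Kelmans-type transformation'' is a gesture, not an argument, and the claim needs a citation or a proof. Second, your back end inverts the logic: once $d(u^*)\ge m-1$ is actually established, no characteristic-polynomial case analysis is needed at all — the at most one extra edge lies inside $A$ (giving $K_{1,m-1}+e$), joins $A$ to a new vertex (giving $K_{1,m-1}^e$), or is absent (giving $K_{1,m}$), with $K_{1,m-1}\cup P_2$ arising from the disconnected case, and all four contain $K_{1,m-1}$, hence satisfy $\rho\ge\sqrt{m-1}$. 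Conversely, with only $d(u^*)\ge m-O(1)$ and no explicit constant, your ``short explicit list'' of near-stars is not a finite check, and ruling out candidates such as $K_{1,m-2}$ with two extra pendant edges requires exactly the refined spectral estimate you deferred — so the difficulty reappears where you hoped to have discharged it. Finally, the threshold $m\ge 27$ is not where ``polynomial estimates become tight''; it is $\max\{(k^2+2k+2)^2+k+1,(2k+3)^2+k+1\}$ at $k=1$ in the paper's quantitative scheme.
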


It is easy to check that $\rho(H)<\rho(K_{1,m})$ if $H\in \{K_{1,m-1}+e, K_{1,m-1}^e, K_{1,m-1}\cup P_2\}$ and $m\ge 27$.
This, together with Theorems \ref{thm:Niki} and \ref{thm:Wang}, indicates that
if $\rho(G)\ge\sqrt{m}$ for a graph $G$ of size $m\ge 27$, then $C_4\subseteq G$ unless $G$ is $K_{1,m}$.
Indeed, from Theorems \ref{thm:Niki} and \ref{thm:Wang}, if $m\ge 27$ and $\rho(G)\ge\sqrt{m-k}$ for $k=0$ or $1$,
then $C_4\subseteq G$ unless $K_{1,m-k}\subseteq G$.
Motivated by this, we hope to give a general result in terms of the value of $k$.

\begin{thm}\label{main-thm-1}
Let $k\ge 0$ be an integer and $G$ be a graph of size $m\ge\max\{(k^2+2k+2)^2+k+1,(2k+3)^2+k+1\}$.
If $\rho(G)\ge\sqrt{m-k}$, then $K_{1,m-k}\subseteq G$ or $C_4\subseteq G$.
\end{thm}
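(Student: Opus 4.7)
My plan is to prove the contrapositive: assume $G$ is $C_4$-free and $K_{1,m-k}\not\subseteq G$, so $\Delta(G)\le m-k-1$, and show $\rho(G)<\sqrt{m-k}$. Since $\rho(G)$ is realized on a single connected component, I may assume $G$ is connected. Let $x$ be the Perron eigenvector normalized by $x_v:=\max_u x_u=1$, and write $d:=d(v)\le m-k-1$.

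The $C_4$-freeness has two structural consequences I will exploit throughout. First, $G[N(v)]$ is a matching (two adjacent edges inside $N(v)$ would create a $C_4$ through $v$), so $e_1:=e(G[N(v)])\le d/2$. Second, $|N(v)\cap N(w)|\le 1$ for every $w\ne v$. Writing $W=\{w\ne v:N(v)\cap N(w)\ne\emptyset\}$, the identity $(A^2x)_v=\rho^2 x_v$ becomes
\[
\rho^2=d+\sum_{w\in W}x_w,
\]
with $|W|=2e_1+e_2$, where $e_2$ denotes the number of edges between $N(v)$ and $V\setminus N[v]$.

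A first estimate of the sum applies $\rho x_w\le d(w)$ at each $w\in W$; since $v\notin W$ contributes $d$ to $\sum_z d(z)=2m$, a double-count gives $\sum_{w\in W}d(w)\le 2m-d$, leading to the cubic inequality $\rho^3\le d\rho+(2m-d)$. Substituting $\rho=\sqrt{m-k}$ forces $d\ge m-k-\sqrt{m-k}-O(1)$ once $m$ is large, so $G$ is essentially a star with only $m-d=O(\sqrt{m-k})$ non-star edges. This accounts for the regime controlled by the weaker threshold $(2k+3)^2+k+1$.

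The hard part is closing the remaining gap between $d$ and $m-k-1$. In this near-star regime, most vertices in $V\setminus\{v\}$ are pendants of $v$ with eigencomponent $\approx 1/\rho$. Iterating the eigenvalue equation at $w\in W_2:=W\setminus N(v)$ — each of which has a \emph{unique} neighbor in $N(v)$ by $C_4$-freeness — gives $x_w\le 1/\rho+O(1/\rho^2)$, a factor $1/\rho$ gain on the leading term compared to the crude bound $x_w\le d(w)/\rho$. Feeding this refinement back into the identity for $\rho^2$ and comparing to $\sqrt{m-k}$ should force $d\ge m-k$, contradicting $\Delta\le m-k-1$. The main obstacle is this second iteration's bookkeeping, since vertices in $W_1\subseteq N(v)$ do not admit the same $1/\rho$ gain; matching the final estimate to the stronger hypothesis $m\ge(k^2+2k+2)^2+k+1$ will require a careful splitting of $\sum_{w\in W}x_w$ into contributions from vertices in and outside $N(v)$, and quantifying how close to pendants the rest of $G$ must be.
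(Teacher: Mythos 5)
Your first stage is sound and is essentially the paper's opening move: with $x_v=1$ at the maximal vertex, the identity $\rho^2 = d + \sum_{w\in W}x_w$ (valid since $C_4$-freeness gives $|N(v)\cap N(w)|\le 1$), combined with $\rho x_w\le d(w)$ and $\sum_{w\ne v}d(w)=2m-d$, does yield $\rho^3\le d\rho+2m-d$ and hence $d\ge m-k-O(\sqrt{m})$. But the second stage, which you yourself flag as ``the main obstacle,'' is where the proof actually lives, and the route you sketch has a genuine gap there. First, the claimed refinement $x_w\le 1/\rho+O(1/\rho^2)$ for $w\in W_2$ does not follow from what you have established: at this point a vertex $w\notin N[v]$ may still have degree as large as $m-d=O(\sqrt{m})=O(\rho)$, and each of its neighbors outside $N(v)$ carries component $O(1/\rho)$, so in $\rho x_w\le 1+\sum_{z\in N(w)\setminus N(v)}x_z$ the second term is only $O(1)$, giving $x_w=O(1/\rho)$ with an uncontrolled constant --- no second-order gain. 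Second, and more fundamentally, your endgame is to ``force $d\ge m-k$'' against the hypothesis $d\le m-k-1$: a gap of exactly one. Estimates carrying unquantified $O(\cdot)$ constants cannot resolve a difference of a single edge; you need exact, constant-size structural control before the bootstrap can close.

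The paper supplies precisely that missing ingredient. Splitting $B=V\setminus N[v]$ into $B_1$ (vertices with no neighbor in $B$, hence pendant, with $x_u\le x_v/\rho$) and $B_2=B\setminus B_1$, it first uses the cheap combinatorial bound $e(A,B_2)\le 2e(B_2)=2e(B)$ (each $B_2$-vertex sends at most one edge to $A=N(v)$) inside a first-pass eigenvalue inequality to prove the \emph{sharp} fact $e(B)\le k$, valid once $\rho>2k+3$; this is where the threshold $m\ge(2k+3)^2+k+1$ enters. That constant bound immediately gives $d(u)\le k+1$, hence $x_u\le (k+1)x_v/\rho$, for every $u\in B_2$ --- the quantitative ``pendant-closeness'' your sketch asks for but does not supply. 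Feeding this back in a second pass produces $(\rho-3)e(A)+(\rho-1)e(B)+(\rho-2)e(A,B_1)+(\rho-k-2)e(A,B_2)\le k(\rho-1)$, and the contradiction is then an exact edge count rather than a degree bound: $K_{1,m-k}\not\subseteq G$ means $m-|A|=e(A)+e(B)+e(A,B_1)+e(A,B_2)\ge k+1$, while the inequality forces $m-|A|\le k(\rho-1)/(\rho-k-2)<k+1$ once $\rho>k^2+2k+2$, which is where the threshold $m\ge(k^2+2k+2)^2+k+1$ enters. So the architecture of your plan (crude pass, then bootstrap) parallels the paper correctly, but as written the bootstrap is unjustified and the final comparison is pitched at the wrong quantity; to complete it you must first prove a constant bound such as $e(B)\le k$ and then convert the conclusion into the edge-counting dichotomy above. (A minor caution as well: reducing to a connected component changes the size $m$, so the hypothesis $\rho\ge\sqrt{m-k}$ and the target $K_{1,m-k}$ do not transfer verbatim; the paper avoids this by deleting only isolated vertices.)
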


Next we turn our attention to study the relation of the maximum degree and the signless Laplacian spectral radius of a graph.
In a sense, the signless Laplacian matrix can significantly reveal the structure properties of graphs $G$
since $Q(G)$ consists of the adjacency matrix and the diagonal matrix of degree sequence.
For more details, readers are referred to \cite{OLAH10,LGW21},
and a series of survey by Cvetkovi\'c and Simi\'c \cite{CS09,CS10,CS10+}.

A signless spectral Tur\'an type version of extremal graph theory has been extensively studied by researchers.
Much literature studied the maximal signless Laplacian spectral $q(G)$ among $H$-free graphs in terms of order,
including triangles \cite{ZHG21}, cycles \cite{Y14, NY15} and linear forests \cite{CLZ20}.
There is few investigation on signless spectral Tur\'an type problem in terms of the size.

The topic we focus on is inspired from a theorem by Zhai, Xue and Lou \cite{ZXL20}, which can be viewed as
signless spectral Tur\'an type problem for stars in terms of the size.

\begin{thm}\label{thm:Zhai+Xue+Lou}\textsc{\cite{ZXL20}}
Let $G$ be a graph of size $m\ge 4$. If $G$ is a graph without isolated vertices, then
$q(G)\le m + 1$ with equality if and only if $G= K_{1,m}$.
\end{thm}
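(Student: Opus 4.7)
My plan is to prove the inequality via a double-counting argument feeding into a classical upper bound on $q(G)$, and then handle the equality case using the corresponding equality condition together with some divisibility bookkeeping.

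For the upper bound, I first observe that for any edge $uv\in E(G)$, the set of edges incident to $u$ or $v$ has cardinality $d(u)+d(v)-1$ (the edge $uv$ being the only double-counted element) and is at most $m$, whence
\[
d(u)+d(v)\le m+1 \qquad \text{for every edge } uv.
\]
Combining this with Merris's well-known bound $q(G)\le \max_{uv\in E(G)}\{d(u)+d(v)\}$ immediately yields $q(G)\le m+1$.

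For the equality case, assume $q(G)=m+1$. I would first argue that $G$ must be connected: a second nontrivial component (forced by the no-isolated-vertex hypothesis) would leave the $q$-maximizing component with strictly fewer than $m$ edges, and applying the inequality to that component would give $q(G)<m+1$, a contradiction. With $G$ connected, the equality $q(G)=\max_{uv}\{d(u)+d(v)\}$ in Merris's bound applies and forces $G$ to be either regular or semiregular bipartite. For a $d$-regular graph, the conditions $2d=m+1$ and $nd=2m$ give $n=4-\tfrac{4}{m+1}$, which fails to be a positive integer for any $m\ge 4$. For a semiregular bipartite graph with parts of sizes $p,q$ and degrees $a\le b$, the conditions $pa=qb=m$ and $a+b=m+1$ together with a short divisibility argument (noting that $\gcd(a(p-1)+1,a)=1$) force $a=1$, $q=1$, $p=b=m$, whence $G=K_{1,m}$.

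The main obstacle is this equality step. The inequality itself is a clean application of Merris's bound, but recovering the unique extremal graph requires invoking Merris's equality characterization and then eliminating the regular and all nontrivial semiregular bipartite cases. An alternative, more self-contained route would substitute direct Perron-eigenvector analysis: fixing an edge $uv$ with $d(u)+d(v)=m+1$ forces $V(G)\setminus\{u,v\}$ to be independent, so every remaining vertex is either a pendant of $u$, a pendant of $v$, or a common neighbor; expressing the Perron entries at each type explicitly in terms of $x_u,x_v,m$ and substituting into the eigenvector equations at $u$ and $v$ produces constraints which, under $m\ge 4$, again force $G=K_{1,m}$. This variant trades the external appeal to Merris's equality case for slightly heavier eigenvector bookkeeping.
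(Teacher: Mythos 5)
Your proof is correct, but note first that the paper itself never proves this statement: Theorem \ref{thm:Zhai+Xue+Lou} is imported verbatim from \cite{ZXL20}, and the closest in-house machinery is the proof of Theorem \ref{main-thm-2}, whose $k=0$ case recovers the containment $K_{1,m}\subseteq G$ under $q(G)\ge m+1$. Both that proof and the original one proceed analytically: a Perron eigenvector of $Q(G)$, the Rayleigh identity $q=\sum_{uv\in E(G)}(x_u+x_v)^2$, control of the large eigenvector entries (the set $W$), and an edge-rotation step exploiting extremality. Your route is genuinely different --- structural and arithmetic rather than analytic: the double count $d(u)+d(v)-1\le m$ over edges incident to a fixed edge, the edge-degree bound $q(G)\le \max_{uv\in E(G)}\{d(u)+d(v)\}$, its equality characterization for connected graphs (edge-degree sums constant, hence regular or semiregular bipartite), and then Diophantine elimination. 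I checked the two delicate steps and both close: (i) in the regular case, $2d=m+1$ and $nd=2m$ force $(m+1)\mid 4$, so $m\le 3$; it is worth observing that $m=3$ gives $K_3$, which genuinely attains $q=m+1$, so the hypothesis $m\ge 4$ enters exactly here; (ii) in the semiregular case, $b=a(p-1)+1$ is coprime to $a$ and divides $pa$, hence divides $p$, while $b\ge p$, forcing $b=p$ and $(a-1)(p-1)=0$, which with $m\ge 4$ yields $G=K_{1,m}$. As to what each approach buys: yours is shorter, delivers the exact equality characterization directly, and is elementary modulo the edge-degree bound --- though you should attribute that bound to the Anderson--Morley line of results (or prove it in two lines via $B^{T}B=2I+A(L(G))$, where $B$ is the incidence matrix) rather than to Merris, whose bound is the $\max\{d(u)+m(u)\}$ version; the eigenvector technique of \cite{ZXL20} and of Theorem \ref{main-thm-2} is heavier but scales to the perturbed threshold $q\ge m-k+1$, where no edge-degree-sum bound of your type can succeed. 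Two small completeness notes: the inequality for disconnected $G$ deserves one line ($q(G)=\max_i q(G_i)\le \max_i(m_i+1)\le m+1$, every component being nontrivial), and your connectivity reduction in the equality case is fine as stated since each additional component carries at least one edge.
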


Theorem \ref{thm:Zhai+Xue+Lou} infers that if $q(G)\ge m+1$ for a graph of size $m$, then
$K_{1,m}\subseteq G$ (in fact, $G=K_{1,m}$ when $G$ has no isolate vertex).
We show the following result, which extends Theorem \ref{thm:Zhai+Xue+Lou}.

\begin{thm}\label{main-thm-2}
Let $k\ge 0$ be an integer and $G$ be a graph of size $m\ge\max\{\frac{1}{2}k^2+6k+3, 7k+25\}$.
If $q(G)\ge m-k+1$, then $K_{1,m-k}\subseteq G$.
\end{thm}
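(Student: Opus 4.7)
The plan is to argue by contradiction: assume $\Delta := \Delta(G) \le m-k-1$ (so $K_{1,m-k}$ is not a subgraph of $G$) and derive $q(G) < m-k+1$, contradicting the hypothesis. The central tool will be a Merris-type upper bound for the signless Laplacian spectral radius. Letting $x$ be the Perron eigenvector of $Q(G)$, I multiply the eigenequation $q(G)\,x_v = d(v)\,x_v + \sum_{u \sim v} x_u$ by $d(v)$, sum over $v$, and regroup the double sum via $\sum_v d(v)\sum_{u\sim v}x_u = \sum_u d(u)\,m(u)\,x_u$, where $m(u) := \frac{1}{d(u)}\sum_{w\sim u}d(w)$ denotes the average degree of neighbors of $u$. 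This yields
\[
q(G) \sum_v d(v)x_v = \sum_v d(v)\bigl(d(v)+m(v)\bigr)x_v ,
\]
and since the right-hand side is a weighted average of $d(v)+m(v)$ with nonnegative weights $d(v)x_v$, one obtains $q(G) \le \max_{v:\,d(v)\ge 1}\{d(v)+m(v)\}$.

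It then remains to show that under the assumption $\Delta \le m-k-1$, the quantity $d(v)+m(v)$ is strictly less than $m-k+1$ for every vertex $v$. Using the elementary bound $\sum_{u \sim v} d(u) \le 2m-d(v)$, I split into three cases on $d(v)$. If $d(v)=1$, then $d(v)+m(v) = 1 + d(\text{its neighbor}) \le 1 + \Delta \le m-k$. If $d(v)\ge 3$, then $d(v)+m(v) \le d(v) + \tfrac{2m}{d(v)} - 1$, which is $< m-k+1$ iff $d^2 - d(m-k+2) + 2m < 0$; since this convex quadratic takes the common value $-m+3k+3$ at $d=3$ and $d=m-k-1$, the inequality holds on the whole interval $[3,m-k-1]$ as soon as $m > 3k+3$, which is ensured by $m\ge 7k+25$.

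The critical case is $d(v)=2$, where the naive estimate only gives $d(v)+m(v) \le 2 + \Delta = m-k+1$. To upgrade this to a strict inequality I plan to prove, as a preliminary observation, that under the size hypothesis $G$ can contain at most \emph{one} vertex of degree exactly $\Delta=m-k-1$: if two such vertices $u_1,u_2$ existed, then the number of edges incident to $\{u_1,u_2\}$ would be at least $2\Delta - 1 \ge 2m-2k-3$, contradicting the fact that $G$ has only $m$ edges once $m > 2k+3$. Consequently, among the two neighbors of any degree-$2$ vertex at most one can have degree $\Delta$, so $d(v)+m(v) \le 2 + \tfrac{2\Delta-1}{2} = m-k+\tfrac{1}{2} < m-k+1$.

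The main obstacle is precisely this $d(v)=2$ case, where the Merris-type bound is the tightest and the gap to $m-k+1$ is the narrowest; it is also where the hypothesis on $m$ in terms of $k$ really enters the argument. Once all three cases are combined, $\max_v\{d(v)+m(v)\} < m-k+1$ and therefore $q(G) < m-k+1$, which contradicts the hypothesis and forces $\Delta(G) \ge m-k$, i.e., $K_{1,m-k} \subseteq G$.
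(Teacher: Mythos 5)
Your proposal is correct, but it takes a genuinely different route from the paper. The paper argues via an extremal counterexample and eigenvector analysis: it takes $G$ maximizing $q$ among graphs of size $m$ with $K_{1,m-k}\not\subseteq G$, shows that the set $W=\{u: x_u\ge\frac{1}{2}x_{u^*}\}$ is the singleton $\{u^*\}$ (this is where $m\ge 7k+25$ enters), performs an edge rotation (delete an edge disjoint from $u^*$, attach a pendant vertex to $u^*$) to pin down $d(u^*)=m-k-1$ exactly, and then iterates the eigenequation twice to sharpen the bound on the entries $x_u$ for $u\ne u^*$ until the resulting estimate contradicts $m\ge\frac{1}{2}k^2+6k+3$. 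You instead invoke the classical signless Laplacian bound $q(G)\le\max_v\{d(v)+m(v)\}$ — which is in the literature (e.g., in the Oliveira--de Lima--de Abreu--Hansen paper the authors already cite), and your weighted-average derivation of it is sound, modulo the small remark that $\sum_v d(v)x_v>0$ must be noted; this holds because $q\ge m-k+1>0$ forces the Perron vector to be supported on a component containing an edge — and then reduce everything to degree counting. Your three cases check out: for $d(v)\ge 3$ the convex quadratic $f(d)=d^2-(m-k+2)d+2m$ does take the common value $3k+3-m<0$ at $d=3$ and $d=m-k-1$, so it is negative on the whole interval once $m>3k+3$; and your counting claim that two vertices of degree $m-k-1$ would force $m\ge 2(m-k-1)-1$, i.e.\ $m\le 2k+3$, correctly breaks the tight $d(v)=2$ case (and when $\Delta\le m-k-2$ that case is immediate anyway, which is the precise reading your slightly loose phrasing needs). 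Notably, your argument uses only $m>3k+3$ and $m>2k+3$, so it establishes the theorem under a linear-in-$k$ threshold on $m$, strictly weaker hypotheses than the stated $m\ge\max\{\frac{1}{2}k^2+6k+3,\,7k+25\}$; what the paper's eigenvector method buys is self-containedness and a template that parallels its adjacency-spectral argument for the $C_4$/star theorem, whereas your route is more elementary and quantitatively stronger for this particular statement.
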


The rest of this paper is organized as follows.
Notations are introduced in Section \ref{section2}.
Proofs of Theorems \ref{main-thm-1} and \ref{main-thm-2} are presented in
Sections \ref{section2} and \ref{section3}, respectively.
In Section \ref{concluding work}, we propose a conjecture on the relation of maximum degree and
spectral radius of adjacency matrix in terms of order.

\section{Proof of Theorem \ref{main-thm-1}}\label{section2}

\indent \, \quad Before stating details of the proof, we shall introduce some terminologies and notations.
For a graph $G$, let $u$ be a vertex of $G$, and $S,T$ be two subsets of $V(G)$.
Then let $N_S(u)$ denote the set of neighbor of $u$ in $S$, and $d_S(u)$ be the cardinality of $N_S(u)$,
i.e., $d_S(u)=|N_S(u)|$.
Specially, if $S=V(G)$ then we omit the subscript $S$.
The minimum degree of $G$ is defined to be $\delta(G)=\min\{d(u):u\in V(G)\}$.
Let $G[S]$ be the subgraph of $G$ induced by $S$, then
denote $E(S)$ by the set of edges in $G[S]$ and $e(S)$ by the cardinality of $E(S)$.
Suppose that $S\cap T=\varnothing$.
Then we denote $e(S,T)$ by the number of edges with one endpoint in $S$ and another endpoint in $T$.

\vskip0.1in
\noindent
{\bf Proof of Theorem \ref{main-thm-1}.}
We prove Theorem \ref{main-thm-1} by way of contradiction.
Assume that there are graphs $H$ of size $m\ge\max\{(k^2+2k+2)^2+k+1,(2k+3)^2+k+1\}$ with $\rho(H)\ge\sqrt{m-k}$,
such that $K_{1,m-k}\not\subseteq H$ and $C_4\not\subseteq H$.
Let $G$ be a graph with the maximum spectral radius among graphs satisfying the above conditions.
Since adding/deleting isolated vertices to/from $G$ not changes the value of $\rho(G)$,
we can let $G$ contain no isolated vertices. For simplification, we write $\rho$ by $\rho(G)$.

Let $\emph{\textbf{x}}$ be a nonnegative eigenvector of $A(G)$ corresponding to $\rho$
with coordinate $x_i$ corresponding to the vertex $v_i$ of $G$.
Let $u^*$ be a vertex of $G$ with $x_{u^*}=\max\{x_i:v_i\in V(G)\}$, then
we have a partition $\{u^*\}\cup A\cup B$ of $V(G)$ where $A=N(u^*)$ and $B=V(G)\backslash N[u^*]$.
Thus,
\begin{align}\label{mainEq1-1}
\rho^2x_{u^*}=\sum_{u\in N(u^*)}\rho x_u&=\sum_{u\in N(u^*)}\sum_{v\in N(u)}x_v\nonumber\\
&=|A|x_{u^*}+\sum_{uv\in E(A)}(x_u+x_v)+\sum_{u\in B}d_A(u)x_u.
\end{align}

Next we establish two necessary claims.

%\begin{claim}\label{clm:1-connected}
%$G$ is connected. (\textcolor[rgb]{1.00,0.00,0.00}{Delete this claim?})
%\end{claim}
%\begin{proof}
%Suppose, on the contrary, that $G$ is disconnected.
%Let $G=G_1\cup\cdots\cup G_s$ with $s\ge 2$ and $e(G_i)\ge 1$ for $i=1,\ldots,s$.
%Without loss of generality, let $\rho(G_1)=\rho$.
%We distinguish the following two cases.
%\vskip 0.1in
%\noindent
%\textbf{Case 1.} $G_1$ is a regular graph with regular degree $m-k-1$.
%Then $n(G_1)\ge m-k$. It follows that
%$$2m>2m(G_1)=(m-k-1)n(G_1)\ge(m-k-1)(m-k),$$
%that is, $m^2-(2k+3)m+k^2+k< 0$, which contradicts the fact that $m\ge\max\{(k^2+2k+2)^2+k+1,(2k+3)^2+k+1\}$.
%\vskip 0.1in
%\noindent
%\textbf{Case 2.} $\delta(G_1)\le m-k-2$.
%Let $G'$ be the graph obtained from $G$ by deleting an edge in $G_2$ and
%attaching a pendent vertex to a vertex of $G_1$ with degree $\delta(G_1)$.
%Furthermore, let the resulting graph $G'$ be $G'_1\cup G'_2\cup G_3\cup\cdots\cup G_s$.
%Therefore,
%$$\rho(G')=\rho(G'_1)>\rho(G_1)=\rho,$$
%since $G_1$ is a proper subgraph of $G'_1$.
%Clearly, $K_{1,m-k}\subseteq G$ and $C_4\subseteq G$.
%This contradicts the maximality of $G$.
%
%This proves the claim.
%\end{proof}

\begin{claim}\label{clm:1-d_A(uinB)}
For a vertex $u$ in $B$, $d_A(u)\le 1$.
\end{claim}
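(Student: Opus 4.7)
The plan is to apply the $C_4$-free hypothesis directly; this claim is the standard ``common-neighbor forces a $C_4$'' observation that opens most Nikiforov-type arguments. Since $A=N(u^*)$ and $B=V(G)\setminus N[u^*]$, every vertex of $A$ is adjacent to $u^*$, while $u^*\notin A\cup B$ and no vertex of $B$ is adjacent to $u^*$.

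First I would suppose for contradiction that some $u\in B$ has $d_A(u)\ge 2$, and pick two distinct neighbors $v_1,v_2\in A$ of $u$. Next I would verify that the four vertices $u^{*},v_1,u,v_2$ are pairwise distinct: $v_1\ne v_2$ by choice; $u^{*}\ne v_1,v_2$ because $v_1,v_2\in A=N(u^{*})$ and $u^{*}\notin N(u^{*})$; and $u\ne u^{*}, v_1, v_2$ because $u\in B$ while $u^{*}\notin B$ and $A\cap B=\varnothing$. Then I would note that the edges $u^{*}v_1,\, v_1u,\, uv_2,\, v_2u^{*}$ are all present in $G$, which exhibits a copy of $C_4$ in $G$, contradicting $C_4\not\subseteq G$.

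The main ``obstacle'' is really just bookkeeping: one must confirm the four vertices are distinct before declaring the closed walk a $C_4$. The quantitative hypotheses on $m$ and $\rho$ play no role in this step — they will be needed later to control the degree and eigenvector weights on $A$, but the bound $d_A(u)\le 1$ for $u\in B$ is a purely combinatorial consequence of the forbidden-subgraph condition together with the fact that $A$ is the neighborhood of the Perron-maximal vertex $u^{*}$.
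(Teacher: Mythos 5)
Your proof is correct and is exactly the argument the paper intends: the paper's proof of this claim is the one-line remark that it ``follows from the fact that $C_4\not\subseteq G$,'' and your write-up simply spells out that two common neighbors of $u\in B$ in $A=N(u^*)$ would yield the quadrilateral $u^*v_1uv_2$. Your careful check that the four vertices are distinct is a sound filling-in of the detail the paper leaves implicit.
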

\begin{proof}
This claim follows from the fact that $C_4\not\subseteq G$.
\end{proof}

Following the partition of $V(G)$, we give a refinement of $B$.
Let $B=B_1\cup B_2$ be a partition of $B$, such that $B_1=\{u\in B: d_B(u)=0\}$ and $B_2=B\backslash B_1$.
Then for a vertex $u\in B_1$, we have $d_A(u)=1$ from claim \ref{clm:1-d_A(uinB)}, and so $d(u)=1$.

\begin{claim}\label{clm:1-element of B_1}
For a vertex $u$ in $B_1$, $x_u\le\frac{1}{\rho}x_{u^*}$.
\end{claim}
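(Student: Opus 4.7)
The plan is to apply the eigenvalue equation $A(G)\mathbf{x}=\rho\mathbf{x}$ at a vertex $u\in B_1$, once we know that such a vertex is a pendant. The paragraph preceding the claim already observes that if $u\in B_1$ then $d_A(u)\le 1$ (by Claim \ref{clm:1-d_A(uinB)}) and $d_B(u)=0$ (by definition of $B_1$), and since $u\in B$ means $u$ is not adjacent to $u^*$, the only possible neighbors of $u$ lie in $A$. Combined with the convention that $G$ has no isolated vertices, this forces $d(u)=d_A(u)=1$.

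First, I would record this: let $w$ be the unique neighbor of $u$, so $w\in A$. Second, I would write the eigen-equation at $u$,
\begin{equation*}
\rho\,x_u \;=\; \sum_{v\in N(u)} x_v \;=\; x_w.
\end{equation*}
Finally, by the defining property of $u^*$, namely $x_{u^*}=\max_{v\in V(G)} x_v$, we have $x_w\le x_{u^*}$, hence
\begin{equation*}
x_u \;=\; \frac{x_w}{\rho} \;\le\; \frac{x_{u^*}}{\rho},
\end{equation*}
which is exactly the stated inequality.

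There is essentially no obstacle here: the claim is a one-line consequence of the pendant-vertex structure of elements of $B_1$ (already secured by Claim \ref{clm:1-d_A(uinB)} and the no-isolated-vertex assumption) together with the Perron eigen-equation. The only point that merits a line of justification is the step $d(u)=1$, which uses simultaneously that $u\notin N[u^*]$, that $d_B(u)=0$, that $d_A(u)\le 1$, and that $G$ has no isolated vertices; all four are already in place.
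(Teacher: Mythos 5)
Your proposal is correct and matches the paper's proof essentially verbatim: the paper also notes just before the claim that $d_A(u)=1$ and hence $d(u)=1$ for $u\in B_1$ (via Claim \ref{clm:1-d_A(uinB)} and the no-isolated-vertex convention), and then bounds $\rho x_u=\sum_{v\in N(u)}x_v\le x_{u^*}$ using the maximality of $x_{u^*}$. Your write-up simply makes explicit the single-neighbor bookkeeping that the paper leaves implicit.
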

\begin{proof}
Let $u\in B_1$, then we have $\rho x_u=\sum_{v\in N(u)}x_v\le x_{u^*}$.
The claim follows.
\end{proof}

Thus, from claim \ref{clm:1-element of B_1}, by (\ref{mainEq1-1}) we have
\begin{align}\label{mainEq2-1}
\rho^2x_{u^*}\le|A|x_{u^*}+\sum_{uv\in E(A)}(x_u+x_v)+\sum_{u\in B_1}\frac{1}{\rho}x_{u^*}+\sum_{u\in B_2}d_A(u)x_u.
\end{align}
Note that
\begin{align}\label{edgeA-B2}
e(A,B_2)\le 2e(B_2)=2e(B).
\end{align}

%Now we give the proof of Theorem \ref{main-thm-1}.

%Let $k\ge 0$ be an integer.
%Suppose that there are graphs $H$ of size $m\ge\max\{(k^2+2k+2)^2+k+1,(2k+3)^2+k+1\}$ with $\rho(H)\ge\sqrt{m-k}$,
%but $K_{1,m-k}\not\subseteq H$ and $C_4\not\subseteq H$.
%Let $G$ be an extremal graph with the maximum spectral radius among graphs satisfying the above conditions.

Note that $V(G)$ has the partition $\{u\}\cup A\cup B_1\cup B_2$.
Clearly $P_3\not\subseteq G[A]$ since $C_4\not\subseteq G$.
Then, from claim \ref{clm:1-element of B_1}, by (\ref{edgeA-B2}) we obtain
\begin{align*}
\rho\sum_{uv\in E(A)}(x_u+x_v)
&\le 2e(A)x_{u^*}+\sum_{uv\in E(A)}(x_u+x_v)+\sum_{u\in B_1}d_A(u)x_u+\sum_{u\in B_2}d_A(u)x_u\\
&\le 2e(A)x_{u^*}+\sum_{uv\in E(A)}(x_u+x_v)+\frac{e(A,B_1)}{\rho}x_{u^*}+e(A,B_2)x_{u^*}\\
&\le 2e(A)x_{u^*}+\sum_{uv\in E(A)}(x_u+x_v)+\frac{e(A,B_1)}{\rho}x_{u^*}+2e(B)x_{u^*}.
\end{align*}
It follows that
$$\sum_{uv\in E(A)}(x_u+x_v)\le \left(\frac{2e(A)+2e(B)}{\rho-1}+\frac{e(A,B_1)}{\rho(\rho-1)}\right)x_{u^*}.$$
This, together with (\ref{mainEq2-1}), indicates that
\begin{align*}
\rho^2 x_{u^*}&\le
|A|x_{u^*}+\left(\frac{2e(A)+2e(B)}{\rho-1}+\frac{e(A,B_1)}{\rho(\rho-1)}\right)x_{u^*}+ \frac{e(A,B_1)}{\rho}x_{u^*}
+\sum_{u\in B_2}d_A(u)x_u\\
&\le|A|x_{u^*}+\left(\frac{2e(A)+2e(B)}{\rho-1}+\frac{e(A,B_1)}{\rho(\rho-1)}\right)x_{u^*}+ \frac{e(A,B_1)}{\rho}x_{u^*}
+e(A,B_2)x_{u^*}\\
&=\left(|A|+\frac{2e(A)+2e(B)}{\rho-1}+\frac{e(A,B_1)}{\rho-1}+e(A,B_2)\right)x_{u^*}.
\end{align*}
That is, $\rho^2\le|A|+\frac{2e(A)+2e(B)}{\rho-1}+\frac{e(A,B_1)}{\rho-1}+e(A,B_2)$.

On the other hand, we know  that $\rho^2\ge m-k$.
Note that $m=|A|+e(A)+e(B)+e(A,B)=|A|+e(A)+e(B)+e(A,B_1)+e(A,B_2)$.
We have
\begin{align}\label{edge-1}
\rho^2\ge |A|+e(A)+e(B)+e(A,B_1)+e(A,B_2)-k.
\end{align}
Hence,
$$|A|+e(A)+e(B)+e(A,B_1)+e(A,B_2)-k\le |A|+\frac{2e(A)+2e(B)}{\rho-1}+\frac{e(A,B_1)}{\rho-1}+e(A,B_2),$$
which implies that
$$(\rho-3)e(A)+(\rho-3)e(B)+(\rho-2)e(A,B_1)\le k(\rho-1).$$
Since $m\ge (2k+3)^2+k+1$, we have $\rho\ge\sqrt{m-k}>2k+3$.
Thus, $e(B)\le \frac{\rho-1}{\rho-3}k<k+1$, and so $e(B)\le k$.

Therefore, for a vertex $u\in B_2$, we have $d(u)\le k+1$, and
$$\rho x_u=\sum_{v\in N(u)}x_v\le d(u)x_{u^*}\le (k+1)x_{u^*},$$
which follows that $x_u\le \frac{k+1}{\rho}x_{u^*}$.
Furthermore, we obtain
\begin{align*}
\rho\sum_{uv\in E(A)}(x_u+x_v)
&\le 2e(A)x_{u^*}+\sum_{uv\in E(A)}(x_u+x_v)+\sum_{u\in B_1}d_A(u)x_u+\sum_{u\in B_2}d_A(u)x_u\\
&\le 2e(A)x_{u^*}+\sum_{uv\in E(A)}(x_u+x_v)+\frac{e(A,B_1)}{\rho}x_{u^*}+\frac{(k+1) e(A,B_2)}{\rho}x_{u^*}.
\end{align*}
That is,
$$\sum_{uv\in E(A)}(x_u+x_v)\le \frac{1}{\rho-1}\left(2e(A)+\frac{e(A,B_1)}{\rho}+\frac{(k+1)e(A,B_2)}{\rho}\right)x_{u^*}.$$

By (\ref{mainEq2-1}), we have
\begin{align*}
\rho^2 x_{u^*}&\le
|A|x_{u^*}+\frac{1}{\rho-1}\left(2e(A)+\frac{e(A,B_1)}{\rho}+\frac{(k+1)e(A,B_2)}{\rho}\right)x_{u^*}+
\frac{e(A,B_1)}{\rho}x_{u^*}\\
&\ \ +\sum_{u\in B_2}d_A(u)x_u\\
&\le|A|x_{u^*}+\frac{1}{\rho-1}\left(2e(A)+\frac{e(A,B_1)}{\rho}+\frac{(k+1)e(A,B_2)}{\rho}\right)x_{u^*}+ \frac{e(A,B_1)}{\rho}x_{u^*}\\
&\ \ +\frac{(k+1)e(A,B_2)}{\rho}x_{u^*}\\
&=\left(|A|+\frac{2e(A)}{\rho-1}+\frac{e(A,B_1)}{\rho-1}+\frac{(k+1)e(A,B_2)}{\rho-1}\right)x_{u^*}.
\end{align*}
Combining this inequality with (\ref{edge-1}), we obtain
$$|A|+e(A)+e(B)+e(A,B_1)+e(A,B_2)-k\le |A|+\frac{2e(A)}{\rho-1}+\frac{e(A,B_1)}{\rho-1}+\frac{(k+1)e(A,B_2)}{\rho-1},$$
which implies that
\begin{align}\label{constraint-1}
(\rho-3)e(A)+(\rho-1)e(B)+(\rho-2)e(A,B_1)+(\rho-k-2)e(A,B_2)\le k(\rho-1).
\end{align}

Since $K_{1,m-k}\not\subseteq G$. Then $e(A)+e(B)+e(A,B_1)+e(A,B_2)=m-|A|\ge k+1$.
If $k=0$, then $(\rho-3)e(A)+(\rho-1)e(B)+(\rho-2)e(A,B_1)+(\rho-2)e(A,B_2)\le0$ by (\ref{constraint-1}).
So $\rho\le 3$.
Hence, $3\ge\rho\ge\sqrt{m-k}\ge \sqrt{(2k+3)^2+k+1-k}=\sqrt{10}$, a contradiction.

If $k\ge 1$, then by (\ref{constraint-1}) we have
$$e(A)+e(B)+e(A,B_1)+e(A,B_2)\le \frac{\rho-1}{\rho-k-2}k<k+1$$
since $\rho\ge\sqrt{m-k}\ge \sqrt{(k^2+2k+2)^2+k+1-k}>k^2+2k+2$.
This is a contradiction.
%Hence, $|A|=m-(e(A)+e(B)+e(A,B_1)+e(A,B_2))>m-(k+1)$,
%and so $|A|\ge m-k$, which contradicts the fact that $K_{1,m-k}\not\subseteq G$.

This completes the proof.
$\hfill\blacksquare$

\section{Proof of Theorem \ref{main-thm-2}}\label{section3}

\indent \, \quad Notations appeared in this section are the same as those in section \ref{section2}.
For a graph $G$, if $\emph{\textbf{x}}$ is a unit eigenvector of $Q(G)$ corresponding to $q(G)$
with coordinate $x_i$ corresponding to the vertex $v_i$ of $G$,
by the well-known Courant-Fisher theorem, then we have
\begin{align}\label{mainEq1-2}
q(G)=\max_{\|\textbf{y}\|_2=1}\textbf{y}^TQ(G)\textbf{y}=\sum_{v_iv_j\in E(G)}(x_i+x_j)^2.
\end{align}
Note that the formulate $Q(G)\emph{\textbf{x}}=q(G)\emph{\textbf{x}}$ implies that $\big(q(G)I-D(G)\big)\emph{\textbf{x}}=A(G)\emph{\textbf{x}}$.
Then for a vertex $u\in V(G)$, we have
\begin{align}\label{mainEq2-2}
\big(q(G)-d(u)\big)x_u=\sum_{v\in N(u)}x_v.
\end{align}

\vskip0.1in
\noindent
{\bf Proof of Theorem \ref{main-thm-2}.}
We prove theorem \ref{main-thm-2} by way of contradiction.
Suppose that $G$ is the extremal graph with the maximum signless Laplacian spectral radius
among graphs $H$ of size $m\ge\max\{\frac{1}{2}k^2+6k+3, 7k+25\}$ and $K_{1,m-k}\not\subseteq H$.
Then $q(G)\ge m-k+1$.
Let $\emph{\textbf{x}}$ be a nonnegative unit eigenvector of $Q(G)$ corresponding to $q(G)$,
and $u^*$ be a vertex of $G$ with $x_{u^*}=\max\{x_i:v_i\in V(G)\}$.
For simplification, write $q$ by $q(G)$.

Denote by
$$W=\left\{u\in V(G): x_u\ge \frac{1}{2}x_{u^*}\right\}.$$
Note that $u^*\in W$, and so $|W|\ge 1$. We prove the following claim.

\begin{claim}\label{clm:2-W}
$|W|=1$.
\end{claim}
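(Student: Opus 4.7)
The plan is to derive a contradiction from the assumption $|W| \ge 2$, so set $t := |W| \ge 2$. The central tool is the eigenvector equation (\ref{mainEq2-2}), combined with the elementary observations that $x_v \le x_{u^*}$ for every $v \in V(G)$ and $x_v < \tfrac{1}{2} x_{u^*}$ whenever $v \notin W$.

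Applied at $u^*$, equation (\ref{mainEq2-2}) reads $(q - d(u^*)) x_{u^*} = \sum_{v \in N(u^*)} x_v$. Splitting $N(u^*)$ into $N(u^*) \cap W$ (where $x_v \le x_{u^*}$) and $N(u^*) \setminus W$ (where $x_v < \tfrac{1}{2} x_{u^*}$), and using $|N(u^*) \cap W| \le t - 1$, one gets $d(u^*) \ge (2q - t + 1)/3$. The same partition applied at any $w \in W \setminus \{u^*\}$, together with the extra factor $x_w \ge \tfrac{1}{2} x_{u^*}$ on the left hand side, yields $d(w) \ge (q - t + 1)/2$. Summing over $W$:
\begin{align*}
\sum_{v \in W} d(v) \;\ge\; \frac{2q - t + 1}{3} + (t-1)\cdot\frac{q - t + 1}{2} \;=\; \frac{(3t+1) q - (t-1)(3t-1)}{6}.
\end{align*}

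Next I would combine this lower bound with the two elementary upper bounds $\sum_{v \in W} d(v) \le 2m$ (handshake) and $\sum_{v \in W} d(v) = 2 e(G[W]) + e(W, V \setminus W) \le m + \binom{t}{2}$. Feeding in $q \ge m - k + 1$, the second simplifies to $(3t - 5) m \le (3t + 1) k + 6t^2 - 10t$, and the first to $(3t - 11) m \le (3t + 1) k + 3t^2 - 7t$. The $m + \binom{t}{2}$ bound is the sharper of the two for small $t$ and already forces $m \le 7k + 4$ at $t = 2$, well below the hypothesis $m \ge 7k + 25$; the $2m$ bound dominates once $\binom{t}{2}$ exceeds $m$, and together they exclude every integer $t \ge 2$.

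Finally, to keep the case analysis finite I would bound $t$ itself: the normalization $\|\emph{\textbf{x}}\|_2 = 1$ gives $t \cdot (x_{u^*}/2)^2 \le 1$, so $t \le 4 / x_{u^*}^2$, and applying $(x_i + x_j)^2 \le 4 x_{u^*}^2$ in (\ref{mainEq1-2}) gives $x_{u^*}^2 \ge q/(4m) \ge (m - k + 1)/(4m)$; hence $t \le 16m/(m - k + 1)$, which is at most $19$ under the hypothesis on $m$. The main obstacle is the bookkeeping required to verify that at least one of the two linear-in-$m$ constraints above is indeed violated for every $t \in \{2, 3, \ldots, 19\}$; this is precisely where the particular constants in $m \ge \max\{\tfrac{1}{2} k^2 + 6k + 3,\, 7k + 25\}$ enter, with the quadratic threshold being needed only for the larger $t$ that can occur when $k$ itself is large.
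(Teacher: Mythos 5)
Your proposal is correct, but it takes a genuinely different route to the contradiction than the paper does. The pointwise estimates are the same --- your $d(u^*)\ge (2q-t+1)/3$ and $d(w)\ge (q-t+1)/2$ are precisely the paper's inequalities \eqref{Eq2-u'} and \eqref{Eq2-u} with $|W|=t$ kept symbolic instead of being replaced by a numerical cap --- but the surrounding architecture differs in two places. First, to bound $t$ the paper proves the cruder estimate $d(u)\ge \tfrac{1}{3}q$ for every $u\in W$ and invokes the handshake bound $\sum_{u\in W}d(u)\le 2m$, getting $|W|\le 6$; your alternative via $\|\emph{\textbf{x}}\|_2=1$ and $x_{u^*}^2\ge q/(4m)$ (from \eqref{mainEq1-2}) is valid but yields the weaker cap $t\le 16m/(m-k+1)<19$. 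Second, the paper needs only two vertices of $W$: it uses the edge count $d(u^*)+d(u)\le m+1$ to force $m\le 7k+24$, a single computation covering all $2\le |W|\le 6$ at once, while you sum the degree bounds over all of $W$ and must check every $t$ up to $19$ against your two constraints. That finite check does close: your constraints simplify to $m\le k+\tfrac{6k}{3t-5}+2t$ and, for $t\ge 4$, to $m\le k+\tfrac{12k}{3t-11}+t+\tfrac{4t}{3t-11}$, and under $m\ge 7k+25$ alone the first disposes of all $t$ up to roughly $2.5k+12$ (in particular every $t\le 12$ for every $k\ge 0$), while the second kills the remaining large-$t$, small-$k$ cases (e.g. $k=0$ with $t\in\{13,\dots,18\}$, where the first bound is slack); note the second constraint is vacuous for $t\in\{2,3\}$ since $3t-11<0$, so there the $m+\binom{t}{2}$ bound must carry the day, which it does. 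One inaccuracy in your closing remark: the quadratic threshold $m\ge\tfrac12 k^2+6k+3$ is never needed for this claim --- $m\ge 7k+25$ suffices in both your argument and the paper's, and the quadratic bound enters only at the very end of the proof of Theorem \ref{main-thm-2}. Net trade-off: your route costs a multi-case verification where the paper's bootstrap needs one line, but it produces sharper per-$t$ constraints (e.g. $m\le 7k+4$ at $t=2$ versus the paper's $m\le 7k+24$) and avoids the two-stage refinement of the degree bounds.
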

\begin{proof}
For a vertex $u\in W$, we know $x_u\ge \frac{1}{2}x_{u^*}$.
Then, by (\ref{mainEq2-2}),
$$\big(q-d(u)\big)\frac{1}{2}x_{u^*}\le\big(q-d(u)\big)x_u=\sum_{v\in N(u)}x_v\le d(u)x_{u^*},$$
which follows that $d(u)\ge\frac{1}{3}q$.

Since $q\ge m-k+1$ and $m\ge 7k+24$. Then
$$2m\ge\sum_{u\in W}d(u)\ge\frac{1}{3}q|W|\ge \frac{1}{3}(m-k+1)|W|,$$
that is, $|W|\le\frac{6m}{m-k+1}<7$. Thus, $|W|\le 6$.

Now we can improve the lower bound that $d(u)\ge\frac{1}{3}q$ for $u\in W$.
By (\ref{mainEq2-2}), we obtain that
\begin{align*}
\big(q-d(u^*)\big)x_{u^*}=\sum_{v\in N(u^*)}x_v
&=\sum_{v\in N(u^*)\cap W}x_v+\sum_{v\in N(u^*)\backslash W}x_v\nonumber\\
&\le (|W|-1)x_{u^*}+(d(u^*)-|W|+1)\frac{1}{2}x_{u^*}\\
&=\frac{1}{2}(d(u^*)+|W|-1)x_{u^*},
\end{align*}
which follows that
\begin{align}\label{Eq2-u'}
d(u^*)\ge\frac{2}{3}q-\frac{1}{3}|W|+\frac{1}{3}\ge\frac{2}{3}q-\frac{5}{3}.
\end{align}

Assume that $|W|\ge 2$.
For a vertex $u\in W\backslash \{u^*\}$, we obtain that
\begin{align*}
\big(q-d(u)\big)x_{u}=\sum_{v\in N(u)}x_v
&=\sum_{v\in N(u)\cap W}x_v+\sum_{v\in N(u)\backslash W}x_v\nonumber\\
&\le (|W|-1)x_{u^*}+(d(u)-|W|+1)\frac{1}{2}x_{u^*}\\
&=\frac{1}{2}(d(u)+|W|-1)x_{u^*}.
\end{align*}
On the other hand, we have $(q-d(u))x_{u}\ge\frac{1}{2}(q-d(u))x_{u^*}$.
Hence, $\frac{1}{2}(d(u)+|W|-1)\ge \frac{1}{2}(q-d(u)),$
that is,
\begin{align}\label{Eq2-u}
d(u)\ge\frac{q}{2}-\frac{5}{2}.
\end{align}

Combining (\ref{Eq2-u'}) and (\ref{Eq2-u}), we have
$$m+1\ge d(u^*)+d(u)\ge \frac{2}{3}q-\frac{5}{3}+\frac{q}{2}-\frac{5}{2}=
\frac{7}{6}q-\frac{25}{6}\ge \frac{7}{6}(m-k+1)-\frac{25}{6}.$$
Hence, $m\le 7k+24$, which contradicts the fact that $m\ge 7k+25$.
Thus, $|W|\le 1$, and so $|W|=1$ since $|W|\ge 1$.
\end{proof}

%Now we provide the proof of Theorem \ref{main-thm-2}.

%We prove this theorem by way of contradiction.
%Let $k\ge 0$ be an integer and $m\ge\max\{\frac{1}{2}k^2+6k+3, 7k+25\}$.
%Suppose that $G$ is the extremal graph with the maximum signless Laplacian spectral radius
%among graphs $H$ of size $m$ and $K_{1,m-k}\not\subseteq H$.
%Then $q\ge m-k+1$.

From claim \ref{clm:2-W}, we have $W=\{u^*\}$.
Thus, for two vertices $u,v\in V(G)\backslash \{u^*\}$, it has $x_u+x_v<x_{u^*}$.

We assert that $d(u^*)=m-k-1$. On the contrary, suppose that $d(u^*)\le m-k-2$.
Then there is an edge, says $u_1u_2\in E(G)$, such that $u\notin \{u_1,u_2\}$.
Let $G'$ be the graph obtained from $G$ by deleting the edge $u_1u_2$ and
attaching a pendent vertex $u_0$ to $u^*$, and $\emph{\textbf{x}}'$ be a vector with
\begin{equation*}
x'_w=\left\{
\begin{aligned}
&x_w,&\textrm{if}&\ w\in V(G);\\
&0, & \textrm{if}&\ w=u_0.
\end{aligned}
\right.
\end{equation*}

Note that $\|\emph{\textbf{x}}'\|_2=1$.
By (\ref{mainEq1-2}), we have
\begin{align*}
q(G')-q(G)&\ge \sum_{uv\in E(G')}(x'_u+x'_v)^2-\sum_{uv\in E(G)}(x_u+x_v)^2\\
&=(x_{u^*}+0)^2-(x_{u_1}+x_{u_2})^2>0.
\end{align*}
Since $K_{1,m-k}\not\subseteq G'$.
This deduces a contradiction to the maximality of $G$. Thus, we have $d(u^*)=m-k-1$.

For a vertex $u\in V(G)\backslash\{u^*\}$, we have $d(u)\le k+2$.
Then, from (\ref{mainEq2-2}), we have
\begin{align*}
\big(q-d(u)\big)x_u=\sum_{v\in N(u)}x_v\le x_{u^*}+(d(u)-1)\frac{1}{2}x_{u^*},
\end{align*}
which follows that
\begin{align}\label{Eq2-element-u}
x_u\le \frac{d(u)+1}{2(q-d(u))}x_{u^*}\le \frac{k+3}{2(q-k-2)}x_{u^*}.
\end{align}

We can further improve the lower bound in (\ref{Eq2-element-u}).
Similarly, by (\ref{Eq2-element-u}) we have
\begin{align*}
\big(q-d(u)\big)x_u=\sum_{v\in N(u)}x_v\le x_{u^*}+(d(u)-1)\frac{k+3}{2(q-k-2)}x_{u^*},
\end{align*}
which implies that
\begin{align}\label{Eq2-element-uu}
x_u\le \left(\frac{1}{q-d(u)}+\frac{d(u)-1}{q-d(u)}\frac{k+3}{2(q-k-2)}\right)x_{u^*}\le
\left(\frac{1}{q-k-2}+\frac{(k+1)(k+3)}{2(q-k-2)^2}\right)x_{u^*}.
\end{align}

Recall that $q\ge m-k+1$ and $d(u^*)=m-k-1$.
By (\ref{mainEq2-2}), we obtain
\begin{align*}
2x_{u^*}\le \big(q-d(u^*)\big)x_{u^*}=\sum_{u\in N(u^*)}x_u\le d(u^*)\left(\frac{1}{q-k-2}+\frac{(k+1)(k+3)}{2(q-k-2)^2}\right)x_{u^*}.
\end{align*}
So
\begin{align*}
(m-k-1)\left(\frac{1}{q-k-2}+\frac{(k+1)(k+3)}{2(q-k-2)^2}\right)\ge 2.
\end{align*}
On the other hand, we may check that
\footnotesize{
\begin{align*}
(m-k-1)\left(\frac{1}{q-k-2}+\frac{(k+1)(k+3)}{2(q-k-2)^2}\right)
&\le (m-k-1)\left(\frac{1}{m-2k-1}+\frac{(k+1)(k+3)}{2(m-2k-1)^2}\right)\\
&=(m-2k-1+k)\left(\frac{1}{m-2k-1}+\frac{(k+1)(k+3)}{2(m-2k-1)^2}\right)\\
&=1+\frac{(k^2+6k+3)m-(k^3+9k^2+9k+3)}{2(m-2k-1)^2}\\
&<2,
\end{align*}} \normalsize
where the last inequality holds due to the fact that $m\ge \frac{1}{2}k^2+6k+3$.
This deduces a contradiction.

This completes the proof.
$\hfill\blacksquare$

\section{Concluding remarks}\label{concluding work}

\indent \, \quad Nikiforov \cite{Niki07} showed that if $G$ contains no $C_4$ then $\rho(G)\le \rho(F_n)$, where
$F_n$ is the friendship graph of odd order $n$, with equality if and only if $G=F_n$.
In the same paper (also see \cite{Niki09}), Nikiforov posed a conjecture that for even $n$,
if $G$ contains no $C_4$ then $\rho(G)\le \rho(F'_n)$, where $F'_n$ is obtained from $F_{n-1}$ by
attaching a new vertex to the unique vertex of maximum degree,
with equality if and only if $G=F'_n$.
The conjecture was confirmed by Zhai and Wang in \cite{ZW12}.

It is easy to check that
$$\rho(F_n)=\frac{1+\sqrt{4(n-1)+1}}{2}.$$
Due to a well-known fact that $\rho(G)\ge\frac{2m}{n}$ for a graph $G$ of order $n$ and size $m$,
if $G$ contains no $C_4$,
then we have
$$\frac{2m}{n}\le \rho(G)\le \rho(F_n)=\frac{1+\sqrt{4(n-1)+1}}{2}.$$
That is,
$$m\le\frac{n\left(1+\sqrt{4(n-1)+1}\right)}{4},$$
which is a classic upper bound of the Tur\'an number for $C_4$ by Reiman \cite{R58}.

One can see that from Nikiforov's result on odd $n$ (resp., Zhai-Wang's result on even $n$),
if $\rho(G)\ge \rho(F_n)$ (resp., $\rho(G)\ge \rho(F'_n)$), then $C_4\subseteq G$ or $K_{1,n-1}\subseteq G$.
Motivated by this property, we provide a natural conjecture in terms of the maximum degree as following.

\begin{conj}\label{conj}
Let $s\ge 1$ be an integer and $n\ge f(s)$, where $f(s)$ is a function on $s$.
If $G$ is a graph of order $n$ and
$$\rho(G)\ge \frac{1+\sqrt{4(n-s)+1}}{2},$$
then $K_{1,n-s}\subseteq G$ or $C_4\subseteq G$.
\end{conj}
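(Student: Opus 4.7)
The plan is to prove Conjecture~\ref{conj} by contradiction, adapting the partition technique used in the proofs of Theorems~\ref{main-thm-1} and~\ref{main-thm-2}. Suppose $G$ is $C_4$-free, contains no $K_{1,n-s}$ (so $\Delta(G)\le n-s-1$), and $\rho:=\rho(G)\ge\frac{1+\sqrt{4(n-s)+1}}{2}$; squaring gives $\rho^2-\rho\ge n-s$. Take a nonnegative Perron eigenvector $\mathbf{x}$, choose $u^*$ with $x_{u^*}=\max_v x_v$, and set $A=N(u^*)$, $B=V(G)\setminus N[u^*]$. The $C_4$-free hypothesis forces $G[A]$ to be a matching and $d_A(b)\le 1$ for every $b\in B$. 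Let $A_0=\{v\in A:d_A(v)=0\}$ and $B_1=\{b\in B:d_A(b)=1\}$. Expanding $\rho^2 x_{u^*}$ through the neighbors of each $v\in A$ and using $\sum_{v\in A}x_v=\rho\,x_{u^*}$ then produces the key identity
\begin{equation*}
(\rho^2-\rho-|A|)\,x_{u^*}=\sum_{b\in B_1} x_b-\sum_{v\in A_0} x_v.
\end{equation*}
Since $|A|\le n-s-1$, to derive a contradiction it is enough to prove that the right-hand side is strictly less than $(n-s-|A|)\,x_{u^*}$.

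Two preliminary observations set up the case analysis. First, $\rho x_{u^*}=\sum_{v\in A}x_v\le|A|x_{u^*}$ gives $|A|\ge\rho\ge\sqrt{n-s}$, hence $|B|\le n-1-\sqrt{n-s}$. Second, for $b\in B_1$ the identity $\rho x_b=x_{a(b)}+\sum_{b'\in N(b)\cap B}x_{b'}$ together with $x_{b'}\le x_{u^*}$ yields $x_b\le (1+d_B(b))x_{u^*}/\rho$. In the critical sub-case $|A|=n-s-1$ we have $|B|=s$ and thus $d_B(b)\le s-1$ for every $b\in B$, so $x_b\le s\,x_{u^*}/\rho$ and $\sum_{b\in B_1}x_b\le s^2 x_{u^*}/\rho<x_{u^*}=(n-s-|A|)x_{u^*}$ whenever $\rho>s^2$, i.e.\ whenever $n>s^4+s$, producing the required contradiction. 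In the intermediate sub-case $\sqrt{n-s}\le|A|\le n-s-2$ the slack $n-s-|A|\ge 2$ is larger, and the same iteration can be pushed through provided $\Delta_B:=\max_{b\in B}d_B(b)<\rho-1$: the recursion $\rho\max_{b\in B}x_b\le x_{u^*}+\Delta_B\max_{b\in B}x_b$ gives $\max_{b\in B} x_b\le x_{u^*}/(\rho-\Delta_B)$, which combined with the Reiman-type bound $e(B)\le|B|(1+\sqrt{4|B|-3})/4$ for the $C_4$-free graph $G[B]$ controls $\sum_{b\in B_1}x_b$.

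The hardest case will be the residual sub-case $\Delta_B\ge\rho-1$, in which some vertex $b^\star\in B$ has $B$-degree comparable to $\rho$ and the uniform iteration collapses. My intended remedy is a dichotomy: if $x_{b^\star}$ is also close to $x_{u^*}$, re-running the partition with $b^\star$ in place of $u^*$ (using that $G$ is $C_4$-free everywhere, not merely near $u^*$) should force a structural contradiction with the maximality of $x_{u^*}$; otherwise $x_{b^\star}$ is bounded below $x_{u^*}$ by a fixed fraction, and its contribution to $\sum_{b\in B_1}x_b$ is directly controlled. Quantifying this dichotomy and balancing the resulting error terms against the slack $(n-s-|A|)x_{u^*}$ should determine $f(s)$; the estimate from the critical sub-case suggests $f(s)=O(s^4)$ will suffice.
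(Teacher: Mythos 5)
First, note that the statement you are proving is posed in the paper as an open conjecture (Conjecture~\ref{conj}): the paper offers no proof, and records only that the case $s=1$ follows from Nikiforov's theorem (odd $n$) and Zhai--Wang (even $n$). So the question is whether your outline actually closes the conjecture, and it does not. Your skeleton is sound as far as it goes: the identity $(\rho^2-\rho-|A|)x_{u^*}=\sum_{b\in B_1}x_b-\sum_{v\in A_0}x_v$ is correct (it is the standard $C_4$-free device going back to Nikiforov and Zhai--Wang, with $G[A]$ a matching and $d_A(b)\le 1$), and your treatment of the critical sub-case $|A|=n-s-1$ is a complete, correct argument for $n>s^4+s$. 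But the residual case $\Delta_B\ge\rho-1$ is exactly where all the difficulty of this conjecture lives, and there you have only a hope, not a proof. Re-running the partition centered at $b^\star$ does not work as stated: the identity's usefulness depends on the center carrying the \emph{maximum} eigenvector entry, so that sums over neighborhoods can be replaced by degree counts; with $x_{b^\star}<x_{u^*}$ the inequalities run the wrong way and no contradiction mechanism is specified. Nor does the other horn of your dichotomy help: if $x_{b^\star}$ is bounded below $x_{u^*}$, the danger is not $b^\star$'s own contribution to $\sum_{b\in B_1}x_b$ but that of its many $B$-neighbors, whose entries are precisely what the collapsed iteration fails to control. Note also that the identity is \emph{tight} for the conjectured extremal graphs (for $F_n$ one gets $\rho^2-\rho=n-1$ with both sums empty), so any proof must handle near-extremal two-hub configurations with essentially no slack; this is why even $s=1$ required a full paper (Zhai--Wang \cite{ZW12}) with delicate local arguments.

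Two smaller quantitative problems in the intermediate sub-case $\sqrt{n-s}\le|A|\le n-s-2$. Writing $t=n-s-|A|\ge 2$, you need $(t+s-1)/(\rho-\Delta_B)<t$, i.e.\ $\rho-\Delta_B>1+(s-1)/t$, so your threshold $\Delta_B<\rho-1$ is insufficient for $s\ge 2$; the case boundary must depend on $s$ (roughly $\rho-\Delta_B>(s+1)/2$), which enlarges the residual case you cannot yet handle. Second, the appeal to the Reiman bound $e(B)\le|B|(1+\sqrt{4|B|-3})/4$ is a red herring as stated: $\sum_{b\in B_1}x_b$ is governed by the degree distribution of $G[B]$ (through the per-vertex bounds $x_b\le(1+d_B(b))x_{u^*}/\rho$), not by $e(B)$ alone, so you would need an additional averaging or counting step to convert an edge bound into a bound on that sum. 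In short: correct and standard opening moves, a genuinely complete critical sub-case, but the heart of the conjecture --- the regime with a second near-hub in $B$ --- remains unproved, and no known technique in the paper or its references resolves it for general $s$.
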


Nikiforov's theorem confirmed Conjecture \ref{conj} for $s=1.$
Indeed, Conjecture \ref{conj} provides a spectral method to pursue the Tur\'an number for $C_4$.

\end{document}